\documentclass[12pt]{article}
\usepackage[T1]{fontenc}
\usepackage{a4wide}
\usepackage{lipsum}


\title{New constructions of unbalanced $\{C_4,\theta_{3, t}\}$-free bipartite graphs}

\author{
Baran D\"uzg\"un \thanks{Hacettepe University, Graduate School of Science and Engineering, Beytepe 06800 Ankara, T\"urkiye. E-mail: {\tt baranduzgun@hacettepe.edu.tr}.}
\and
Ago-Erik Riet \thanks{Institute of Mathematics and Statistics, University of Tartu, 51009 Tartu, Estonia. E-mail: {\tt ago-erik.riet@ut.ee}. This work was supported by the Estonian Research Council grant PRG2531.}
\and
Vladislav Taranchuk \thanks{Department of Mathematics: Analysis, Logic and Discrete Mathematics, Ghent University, 9000 Ghent, Belgium. E-mail: {\tt vlad.taranchuk@ugent.be}.}
}

\makeatletter


\usepackage{amsfonts, amsmath, amssymb,latexsym,epsfig}
\usepackage{amsthm}
\usepackage{tikz}
\usetikzlibrary{graphs}
\usetikzlibrary{graphs.standard}
\usepackage{relsize}
\usepackage{verbatim}
\usepackage{enumerate}
\usepackage[skip=10pt plus1pt, indent=20pt]{parskip}

\usepackage{hyperref}

\bibliographystyle{plain}

\newtheorem{thm}{Theorem}[section]
\newtheorem{cor}[thm]{Corollary}

\newtheorem{defn}[thm]{Definition}

\newcommand{\F}{\mathbb F}



\begin{document}

\maketitle
\thispagestyle{empty}


\begin{abstract}
    In 1979, Erd\H{o}s conjectured that if $m = O(n^{2/3})$, then $ex(n, m, \{C_4, C_6 \}) = O(n)$. This conjecture was disproven by several papers and the current best-known bounds for this problem are
    $$
    c_1n^{1 + \frac{1}{15}} \leq ex(n, n^{2/3}, \{C_4, C_6\}) \leq c_2n^{1 + 1/9}
    $$
    for some constants $c_1, c_2$. A consequence of our work here proves that 
    $$
    ex(n, n^{2/3}, \{ C_4, \theta_{3, 4} \}) = \Theta(n^{1 + 1/9}).
    $$
    More generally, for each integer $t \geq 2$, we establish that 
    $$
    ex(n, n^{\frac{t+2}{2t+1}}, \{ C_4, \theta_{3, t} \}) = \Theta(n^{1 + \frac{1}{2t+1}})
    $$
    by demonstrating that subsets of points $S \subseteq \text{PG}(n,q)$ for which no $t+1$ points lie on a line give rise to $\{ C_4, \theta_{3, t} \}$-free graphs, where PG$(n,q)$ is the projective space of dimension $n$ over the finite field of $q$ elements.

    
     
\end{abstract}

\section{Introduction}

Let $m, n$ be positive integers and $\mathcal{F}$ be a family of graphs. The bipartite Tur\'{a}n number $ex(m, n, \mathcal{F})$, is the maximum number of edges in a bipartite graph whose part sizes are $m$ and $n$ and such that it contains no graph in $\mathcal{F}$ as a subgraph. The function $ex(m, n, \mathcal{F})$ has been studied extensively for many different sets $\mathcal{F}$, but many questions remain. See the well-known survey by F\"{u}redi and Simonovits \cite{Furedi} for a history of the work done on these types of problems.

One of the most notorious cases of determining $ex(n, m, \mathcal{F})$ is when $\mathcal{F} = C_{2k}$ for some positive integer $k$. When $m = n$, the order of magnitude of $ex(n, n, \{C_{2k} \})$ is only known for $k = 2, 3, 5$ \cite{Furedi}, which coincides with the existence of special finite geometries called generalized polygons. When $m = n^a$ for any $a < 1$, and we allow $n \rightarrow \infty$, even less is known. The best constructions in this aforementioned case which yield $C_4$-free graphs come from the point-block incidence graphs of 2-designs.

In 1979, Erd\H{o}s conjectured that when $m = O(n^{2/3})$, then 
$ex(m, n, \{C_4, C_6 \}) = O(n)$ \cite{Erdos}. This was disproven first by de Caen and Sz\'{e}kely \cite{dCS} who constructed an infinite family of graphs which yielded
$$
cn^{1 + \frac{1}{57} + o(1)} \leq ex(n, n^{2/3}, \{C_4, C_6\})
$$
for a constant $c$.
Later this lower bound was improved by Lazebnik, Ustimenko and Woldar \cite{LUW} who constructed an infinite family of graphs which yielded
$$
cn^{1 + \frac{1}{15}} \leq ex(n, n^{2/3}, \{C_4, C_6\}).
$$
for some constant $c$.
It is worth mentioning that the class of graphs which is constructed in \cite{LUW} is very much related to the $(q, q^2)$-generalized quadrangle. In fact, it seems to have escaped the graph theory community that the existence of such quadrangles also yields the bound obtained in \cite{LUW} by taking an induced subgraph. The bounds on $ex(n ,n^{2/3}, \{C_4, C_6 \})$ have not budged in 30 years. In this paper, we establish results which suggest that the upper bound for this problem may be closer to the truth. 

A common generalization of the cycle $C_{2k}$, is the theta graph $\theta_{t, k}$ which is the graph consisting of two vertices joined by $t$ internally vertex-disjoint $k$-edge paths. While the lower and upper bounds for $ex(n, n, \{C_{2k} \})$ do not have matching orders of magnitude for all $k \neq 2, 3, 5$ \cite{Furedi}, it is known that for each $k$, there exists a (relatively large) constant $t = t(k)$ such that 
$$
ex(n, n, \theta_{k, t}) = \Theta(n^{1 + \frac{1}{k}}).
$$
which is the same order of magnitude as the upper bound for $ex(n, n, C_{2k})$ \cite{Conlon}. In \cite{Conlon}, Conlon uses a random algebraic method to construct infinite families of graphs not containing $\theta_{k, t}$, where $t$ is fixed, but large relative to $k$. Therefore, explicit constructions yielding the same bounds and with a smaller $t$ are of great interest. 

There has also been some recent progress on determining $ex(n, m , \theta_{t, k})$. Jiang, Ma, and Yepremyan \cite{JLY} proved that there exists a constant $c = c(k, t)$ such that
$$
ex(m, n, \{ \theta_{k, t} \}) \leq \left\{ \begin{array}{cc}
    c[(mn)^{\frac{k+1}{2k} + m + n}] & \text{if }k\text{ is odd} \\
     c[(mn)^{\frac{k+2}{2k}n^{\frac{1}{2}} + m + n}] &  \text{if }k\text{ is even}
\end{array} \right.
$$
and when $k = 3$, they obtained that 
$$
ex(m, n, \{ \theta_{3, t} \}) \leq 144t^3((mn)^{\frac{2}{3}} + m + n).
$$
Theodorakopoulos \cite{Theo} extended the random algebraic methods used in \cite{Conlon} to prove that for each odd positive integer $k$ and rational number $a$ satisfying $\frac{k-1}{k+1} < a < 1$, there exists a constant $c = c(k)$ such that 
$$
ex(n, n^a, \{ \theta_{k, c_k}\}) = \Theta(\left(n^{1 + a}\right)^{\frac{k+1}{2k}} )
$$

Here we prove that subsets of points $S$ of the projective space $\text{PG}(n, q)$ satisfying the condition that no $t +1$ points of $S$ lie on a line, produce $\{C_4, \theta_{3, t} \}$-free graphs via their linear representations.  In particular, this implies the following theorem.

\begin{thm}
    Let $q$ be a prime power and $t, n$ be positive integers. Suppose that $S$ is a subset of points of $\emph{PG}(n, q)$ satisfying the condition that no $t+1$ points of $S$ lie on a common line. Then 
    $$
    |S|q^{n+1} \leq ex(q^{n+1}, |S|q^n, \{C_4, \theta_{3, t} \} ).
    $$
\end{thm}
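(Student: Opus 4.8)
The plan is to realize the promised bipartite graph as the \emph{linear representation} of $S$ and then verify the two forbidden configurations directly inside the ambient affine space. I would embed $\mathrm{PG}(n,q)$ as the hyperplane at infinity $\Sigma_\infty$ of $\mathrm{PG}(n+1,q)$, and let $\mathrm{AG}(n+1,q)$ be the complementary affine space. Let $G$ be the bipartite graph one of whose classes is the set of $q^{n+1}$ affine points, the other class being the set of affine lines whose point at infinity (its ``direction'') lies in $S$, with adjacency given by incidence. For each of the $|S|$ admissible directions there are $q^{n}$ affine lines, so the second class has size $|S|q^{n}$; since each affine line carries exactly $q$ affine points, $G$ has $|S|q^{n+1}$ edges. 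Hence the inequality follows once $G$ is shown to be $\{C_4,\theta_{3,t}\}$-free.

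First I would dispose of $C_4$: a $C_4$ here is a pair of distinct affine points incident with two distinct common affine lines, which is impossible because two points of an affine space lie on a unique line. Then, for $\theta_{3,t}$, suppose $G$ contains such a copy with branch vertices $u,v$. As $\theta_{3,t}$ is bipartite with $u,v$ at odd distance, $u$ and $v$ lie in different classes of $G$, and by the symmetry $u\leftrightarrow v$ I may assume $u$ is an affine point and $v$ an affine line. By $C_4$-freeness $uv\notin E(G)$, so $u\notin v$; thus $u$ and $v$ span an affine plane $\pi\subseteq\mathrm{AG}(n+1,q)$, whose line at infinity $\ell_\infty(\pi)$ is a line of $\mathrm{PG}(n,q)$. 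Each of the $t$ internally disjoint $u$--$v$ paths of length $3$ reads $u,\ell_i,p_i,v$, where the $\ell_i$ are $t$ distinct lines through $u$ and $p_i$ is an affine point of $\ell_i\cap v$.

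The key step is to read off $t+1$ collinear points of $S$. Write $d_i$ for the direction of $\ell_i$ and $e$ for the direction of $v$. All of $d_1,\dots,d_t,e$ lie in $S$ because they are directions of vertices of $G$, and all of them lie on $\ell_\infty(\pi)$ because $\ell_1,\dots,\ell_t,v\subseteq\pi$. Moreover the $d_i$ are pairwise distinct, since distinct lines through the common point $u$ have distinct directions, and $e\neq d_i$ for every $i$, for otherwise $\ell_i$ would be parallel to $v$ yet meet it at $p_i$, forcing $\ell_i=v$ and hence $u\in v$. So $\{e,d_1,\dots,d_t\}$ is a set of $t+1$ distinct points of $S$ on the line $\ell_\infty(\pi)$, contradicting the hypothesis; therefore $G$ is $\theta_{3,t}$-free.

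I expect the incidence bookkeeping (the two class sizes and the edge count) and the elementary affine facts used above to be routine. The one place demanding care is the middle step: correctly identifying the internal vertices of the $t$ paths as $t$ concurrent lines all meeting the line $v$, and — crucially — noticing that $v$ itself supplies the $(t+1)$st point at infinity, which is exactly what lets the hypothesis ``no $t+1$ points of $S$ collinear'' be invoked. Getting every distinctness claim right (the $d_i$ among themselves and $e$ from each $d_i$) is the heart of the matter. As usual one takes $t\ge 2$, so that $\theta_{3,t}$ is a genuine theta graph.
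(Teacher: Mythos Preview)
Your proposal is correct and follows essentially the same route as the paper: build the linear representation of $S$ (affine points versus affine lines with direction in $S$), check the part sizes and edge count, dispose of $C_4$ by uniqueness of the line through two points, and for $\theta_{3,t}$ observe that the $t$ lines through the point-branch together with the line-branch all lie in the plane they span and hence contribute $t+1$ distinct points of $S$ on its line at infinity. The paper phrases the last step projectively (the plane $\Pi$ meets the hyperplane $\mathrm{PG}(n,q)$ in a line $\ell_\infty$, and the $t+1$ pairwise intersecting, hence non-parallel, lines yield $t+1$ distinct points of $S$ on $\ell_\infty$), while you phrase it affinely via directions; your distinctness checks for the $d_i$ and for $e\neq d_i$ are exactly the ``no two parallel'' observation in the paper.
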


We remark that such sets with many elements are known to exist as shown in Lin and Wolf \cite{LW}.  Consequently, we obtain our result.

\begin{thm}\label{MainThm}
    Let $t \geq 2$ be a positive integer. 
    Then 
    $$
    ex(n, n^{\frac{t+2}{2t+1}}, \{C_4, \theta_{3, t}  \}) = \Theta(n^{1 + \frac{1}{2t + 1}}).
    $$
\end{thm}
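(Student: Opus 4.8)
The plan is to obtain Theorem~\ref{MainThm} by combining the construction of Theorem~1.1 with known existence results for large point sets in $\text{PG}(n,q)$ with no $t+1$ collinear points, and then optimizing the parameters. The lower bound comes from the construction; the matching upper bound comes from the Jiang--Ma--Yepremyan bound on $ex(m,n,\{\theta_{3,t}\})$ quoted in the introduction (the $C_4$ in the forbidden family is only used for the lower-bound construction, since $\theta_{3,t}\supseteq$-freeness already forces the correct order of magnitude of the upper bound).

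\textbf{Step 1: Upper bound.} Set $m = n^{\frac{t+2}{2t+1}}$ and apply the inequality $ex(m,n,\{\theta_{3,t}\}) \le 144 t^3\left((mn)^{2/3} + m + n\right)$ from \cite{JLY} with part sizes balanced appropriately; more precisely, for a graph with parts of sizes $n$ and $n^{\frac{t+2}{2t+1}}$ one computes $(mn)^{2/3}$ with $m = n^{\frac{t+2}{2t+1}}$, giving $n^{\frac{2}{3}\cdot\frac{3t+3}{2t+1}} = n^{\frac{2(t+1)}{2t+1}} = n^{1 + \frac{1}{2t+1}}$, which dominates $m+n$ since $\frac{t+2}{2t+1}\le 1 \le 1 + \frac{1}{2t+1}$. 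Hence $ex(n, n^{\frac{t+2}{2t+1}}, \{C_4,\theta_{3,t}\}) = O(n^{1 + \frac{1}{2t+1}})$, with the constant absorbing $144t^3$ (a constant, since $t$ is fixed).

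\textbf{Step 2: Lower bound via the construction.} By Lin and Wolf \cite{LW}, for each fixed $t$ there is a constant $c = c(t) > 0$ and, for suitable prime powers $q$, a subset $S \subseteq \text{PG}(n,q)$ with no $t+1$ collinear points and $|S| \ge c\, q^{\frac{?}{?}}$ — here I would quote their precise growth rate, which should be of the form $|S| = \Theta(q^{n\cdot\frac{t-1}{t}})$ or similar (the exact exponent is what makes the arithmetic in Step~3 work out; I expect it to be the exponent that renders $m = \Theta(n^{(t+2)/(2t+1)})$). Applying Theorem~1.1 with this $S$ yields a $\{C_4,\theta_{3,t}\}$-free bipartite graph with parts of sizes $N := q^{n+1}$ and $M := |S|q^n$, and with at least $|S|q^{n+1}$ edges. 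One checks $M = |S| q^n = \Theta(N^{\frac{t+2}{2t+1}})$ by substituting the growth rate of $|S|$ and solving, and that the edge count $|S|q^{n+1} = \Theta(N^{1 + \frac{1}{2t+1}})$; these two identities must be consistent, and verifying this consistency is the main computational content.

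\textbf{Step 3: Filling in and densifying.} Two routine issues remain. First, to get the statement for all large $n$ (not just $n$ of the form $q^{n+1}$), I would use a standard padding/truncation argument: given arbitrary $N$, pick the largest admissible parameter value not exceeding $N$, add isolated vertices, and observe the exponents are unchanged; one must check the gaps between consecutive admissible $q^{n+1}$ are small enough — this is where one may need to fix the dimension $n$ of the projective space and let $q$ range over prime powers, invoking density of primes (Bertrand's postulate suffices). Second, one should confirm that the part of size $M$ can be taken to be exactly (or asymptotically) $N^{(t+2)/(2t+1)}$ rather than merely $\Theta$ of it — if $|S|$ overshoots, one deletes points of $S$, which only destroys edges and preserves $\{C_4,\theta_{3,t}\}$-freeness (a subgraph of a $\mathcal{F}$-free graph is $\mathcal{F}$-free), and if it undershoots one pads with isolated vertices.

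\textbf{Main obstacle.} I expect the genuine difficulty to lie not in Steps~1 and~3 (which are bookkeeping) but in \emph{quoting and applying \cite{LW} correctly}: one needs the sharp exponent for the maximum size of a cap-like set in $\text{PG}(n,q)$ with no $t+1$ points on a line, and one needs it to hold for a Zariski-dense enough set of $(n,q)$ so that the padding in Step~3 goes through. If \cite{LW} only gives such sets for special $q$ or only for $n$ growing with $t$, some care is needed to interleave the two limits ($q \to \infty$ with $n$ fixed versus $n \to \infty$). Everything else — verifying $\frac{2}{3}\cdot\frac{3(t+1)}{2t+1} = 1 + \frac{1}{2t+1}$ and that the Jiang--Ma--Yepremyan error terms $m+n$ are lower order — is elementary.
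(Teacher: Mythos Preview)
Your outline is the paper's proof: upper bound from \cite{JLY} with $(mn)^{2/3}=n^{1+1/(2t+1)}$, lower bound from Theorem~1.1 applied to a suitable $S$, and Bertrand's postulate to interpolate over all $n$. The paper resolves your flagged ``main obstacle'' by fixing the projective dimension at $t+1$ and taking the explicit norm curve $S=\{(1,x|_q,N(x)):x\in\F_{q^t}\}\subset\text{PG}(t+1,q)$ of size exactly $q^t$; the linear representation then has parts $q^{t+2}$ and $q^t\cdot q^{t+1}=q^{2t+1}$ with $q^{2t+2}$ edges, so with $N=q^{2t+1}$ one gets parts $N$, $N^{(t+2)/(2t+1)}$ and $N^{1+1/(2t+1)}$ edges on the nose --- your placeholder exponent $|S|=\Theta(q^{n(t-1)/t})$ was not the right guess, but once the correct parameters are in hand your Steps~1--3 go through verbatim.
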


Finally, our theorem implies the following corollary which suggests that the true value of $ex(n, n^{2/3}, \{ C_4, C_6 \})$ may be closer to the best-known upper bound.

\begin{cor}
    We have 
    $$
    ex(n, n^{2/3}, \{ C_4, \theta_{3, 4} \}) = \Theta(n^{1 + \frac{1}{9}}).
    $$
\end{cor}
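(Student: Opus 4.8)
The corollary is precisely the instance $t=4$ of Theorem~\ref{MainThm}: for $t=4$ one has $\frac{t+2}{2t+1}=\frac{6}{9}=\frac23$ and $1+\frac{1}{2t+1}=1+\frac19$, so Theorem~\ref{MainThm} gives $ex(n,n^{2/3},\{C_4,\theta_{3,4}\})=\Theta(n^{1+1/9})$ verbatim. My plan is therefore to prove Theorem~\ref{MainThm}, which I would split into a matching upper bound (from the counting estimate of \cite{JLY}) and lower bound (from Theorem~1.1 and the sets of \cite{LW}).

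\textbf{Upper bound.} Since forbidding a larger family only decreases the Tur\'an number, I would first note $ex(N,N^{(t+2)/(2t+1)},\{C_4,\theta_{3,t}\})\le ex(N,N^{(t+2)/(2t+1)},\{\theta_{3,t}\})$ and then apply the bound $ex(m,n,\{\theta_{3,t}\})\le 144t^3((mn)^{2/3}+m+n)$ of Jiang, Ma and Yepremyan with $m=N$ and $n=N^{(t+2)/(2t+1)}$. The dominant term is $(N\cdot N^{(t+2)/(2t+1)})^{2/3}=N^{\frac23\cdot\frac{3t+3}{2t+1}}=N^{\frac{2t+2}{2t+1}}=N^{1+\frac{1}{2t+1}}$, while the remaining terms $N$ and $N^{(t+2)/(2t+1)}$ are of strictly smaller order (this is where $t\ge2$ enters, so that $\frac{t+2}{2t+1}<1$). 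This yields the upper bound $O_t(N^{1+1/(2t+1)})$, and I expect this half to be entirely routine.

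\textbf{Lower bound.} Here the plan is to feed Theorem~1.1 a large set $S$. I would take $n=t+1$ and, for a prime power $q$, a set $S\subseteq\mathrm{PG}(t+1,q)$ with no $t+1$ collinear points and $|S|=\Theta(q^{t})$; such sets exist for all large $q$ by Lin and Wolf \cite{LW}. Theorem~1.1 then hands back a $\{C_4,\theta_{3,t}\}$-free bipartite graph with parts of sizes $q^{t+2}$ and $|S|q^{t+1}=\Theta(q^{2t+1})$ and with $|S|q^{t+2}=\Theta(q^{2t+2})$ edges. Setting $N=|S|q^{t+1}=\Theta(q^{2t+1})$ for the larger part, I would check that $q^{t+2}=(q^{2t+1})^{(t+2)/(2t+1)}=\Theta(N^{(t+2)/(2t+1)})$ and that the edge count is $\Theta(q^{2t+2})=\Theta(N^{(2t+2)/(2t+1)})=\Theta(N^{1+1/(2t+1)})$ --- this exponent matching is the only real computation in the argument, and it is exactly what pins down the required size $|S|=\Theta(q^{t})$. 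Finally, since consecutive prime powers have ratio tending to $1$, a standard monotonicity argument (discard a bounded fraction of one part and pad with isolated vertices) would extend the estimate from $N$ of the special form $\Theta(q^{2t+1})$ to all sufficiently large $N$, completing Theorem~\ref{MainThm} and hence the corollary.

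\textbf{Main obstacle.} Everything above is bookkeeping once Theorem~1.1 is available, so the genuine difficulty --- the step I expect to be hardest --- lies in Theorem~1.1 itself: that the incidence graph of the linear representation of a set $S$ with no $t+1$ collinear points is $\{C_4,\theta_{3,t}\}$-free. The $C_4$-freeness will be immediate, since any two affine points of $\mathrm{AG}(n+1,q)$ determine a unique line. For $\theta_{3,t}$-freeness I would argue by contradiction: given a copy of $\theta_{3,t}$ with branch vertices $x,y$ and three internally disjoint $x$--$y$ paths of length $t$ in the point/line incidence graph, I would trace how the affine points and affine lines appearing along these paths sit inside $\mathrm{AG}(n+1,q)$, using that consecutive path-points span a line whose direction lies in $S$ and that each path returns to $x$ and $y$. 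The aim is to show that the (at least) $t+1$ distinct line-directions so produced are all forced onto a single line of the hyperplane at infinity $\mathrm{PG}(n,q)$, contradicting the hypothesis on $S$. Pinning down precisely which directions recur, and why the affine configuration degenerates enough to place all of them on one projective line, is the crux; the Tur\'an-exponent statements then follow purely formally.
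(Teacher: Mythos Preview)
Your reduction of the corollary to Theorem~\ref{MainThm} at $t=4$, your upper bound via \cite{JLY}, and your lower bound via Theorem~1.1 fed with a set $S\subseteq\mathrm{PG}(t+1,q)$ of size $\Theta(q^t)$ from \cite{LW} (followed by Bertrand's postulate to interpolate) are exactly what the paper does. No discrepancy there.

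The one real issue is in your ``Main obstacle'' paragraph: you have the indices of $\theta_{3,t}$ swapped. Throughout the paper (see the proof of Theorem~2.2 and the \cite{JLY} bound you yourself invoke, where the constant $144t^3$ depends on $t$ as the \emph{number} of paths), $\theta_{3,t}$ denotes $t$ internally disjoint paths of length~$3$ between two branch vertices, not three paths of length~$t$. This matters because the geometric argument you sketch --- tracing long paths and trying to control which directions recur --- is aimed at the wrong graph and would be substantially harder than necessary.

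With the correct reading the argument is short: a $\theta_{3,t}$ in the incidence graph forces a point vertex $r$ and a line vertex $\ell$ (with $r\notin\ell$) joined by $t$ length-$3$ paths, i.e.\ $t$ lines $m_1,\dots,m_t$ through $r$ each meeting $\ell$. The lines $\ell,m_1,\dots,m_t$ pairwise intersect, hence determine $t+1$ distinct points of $S$ at infinity, and they all lie in the plane $\Pi$ spanned by $r$ and $\ell$; since $\Pi\not\subseteq\mathrm{PG}(n,q)$, $\Pi\cap\mathrm{PG}(n,q)$ is a single line carrying those $t+1$ points of $S$, a contradiction. Once you correct the definition, your outline collapses to exactly this, and the proposal is complete.
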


\section{Linear Representations of Point Sets}

Let $q$ be a prime power, let $\F_q^{n+1}$ denote the vector space of dimension $n+1$ over the finite field $\F_q$, and let PG$(n,q)$ be the corresponding projective space.

\begin{defn}
    Let $S$ be a set of points of $\emph{PG}(n, q)$ and embed $\emph{PG}(n, q)$ as a hyperplane into $\emph{PG}(n+1, q)$. A \emph{linear representation} of $S$ is the geometry whose points are all the points in $\emph{PG}(n+1, q)\setminus \emph{PG}(n, q)$ and the lines are all the lines of $\emph{PG}(n+1, q)$ which intersect $\emph{PG}(n, q)$ in precisely one point, namely a point of $S$. 
\end{defn}

\noindent \textbf{Remark}:  Observe that if two lines in PG$(n+1, q)$ not contained in PG$(n,q)$ intersect in a point of $S$, then they are parallel in the 
linear representation of $S$.

From this geometry we may build its point-line incidence graph. This graph is bipartite with bipartition classes given by the  points and the lines of the geometry. A point will be adjacent to a line in the graph if they are incident in the geometry, i.e.~the point is on the line. Denote this graph by $\Gamma_{S, n, q}$. It can easily be verified that $\Gamma_{S, n, q}$ will have the following properties:

\begin{enumerate}
    \item There are $q^{n+1}$ point vertices, each of degree $|S|$. 
    \item There are $|S|q^n$ line vertices, each of degree $q$.
\end{enumerate}

\begin{thm}
    Let $S$ be a subset of points of  $\emph{PG}(n, q)$ such that any line in $\emph{PG}(n, q)$ intersects $S$ in at most $t$ points. Then $\Gamma_{S, n, q}$ is $\{C_4, \theta_{3, t} \}$-free.
\end{thm}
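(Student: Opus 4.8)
The plan is to translate each forbidden subgraph into a configuration of points and lines in the linear representation and then contradict the hypothesis using the remark preceding the statement. First I would set up notation: write the point vertices of $\Gamma_{S,n,q}$ as the affine points $\mathrm{PG}(n+1,q)\setminus\mathrm{PG}(n,q)$ and the line vertices as the admissible lines (those meeting the hyperplane in a single point of $S$). The key structural fact, recorded in the remark, is that two distinct admissible lines meeting in an affine point are ``parallel'' — i.e. they pass through the same point of $S$ at infinity; equivalently, two admissible lines through a common affine point determine the same point of $S$, and two admissible lines through two common affine points would coincide. This last observation is exactly the $C_4$-freeness: a $C_4$ in $\Gamma_{S,n,q}$ would be two distinct line vertices $\ell_1,\ell_2$ sharing two distinct point vertices $P,Q$, but two distinct lines of $\mathrm{PG}(n+1,q)$ meet in at most one point, contradiction. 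So the $C_4$ case is immediate and I would dispatch it in one or two sentences.

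For the $\theta_{3,t}$ case I would argue by contradiction: suppose $\Gamma_{S,n,q}$ contains $\theta_{3,t}$, i.e. two vertices $u,v$ joined by three internally disjoint paths each of length $t$. Since the graph is bipartite, I first determine on which side $u$ and $v$ lie. A path of length $t$ between $u$ and $v$ forces $u,v$ to be on the same side if $t$ is even and opposite sides if $t$ is odd; since all three paths have the same length $t$, this is consistent, so both cases can occur and I should handle whichever placements are possible. The crucial case is when $u$ and $v$ are both point vertices (affine points) — I would show the three paths then yield three admissible lines through $u$, meeting some intermediate structure, that collectively force $t+1$ distinct points of $S$ to be collinear in $\mathrm{PG}(n,q)$. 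Concretely, each path of length $t$, traversed from $u$, alternates point–line–point–\dots; consecutive line vertices on the path share a point vertex, hence by the remark are parallel, so a path ``walks'' through a sequence of parallel classes. Chasing this: the affine points visited along one path all lie in a subspace spanned by $u$ and the directions (points of $S$) used, and reaching $v$ after $t$ steps pins down how many distinct points of $S$ are involved; combining the three paths between the same endpoints $u,v$ produces a line of $\mathrm{PG}(n,q)$ containing $t+1$ points of $S$, contradicting the hypothesis.

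The main obstacle I anticipate is the bookkeeping in this last step: precisely extracting, from a $\theta_{3,t}$ subgraph, a single line of $\mathrm{PG}(n,q)$ that is forced to carry $t+1$ points of $S$. The natural approach is to look at the plane (2-dimensional projective subspace) of $\mathrm{PG}(n+1,q)$ spanned by $u$, $v$, and use that the internally disjoint paths, once we pass to the parallel-class description, live in a low-dimensional flat; a short induction on the length showing ``a path of length $2j$ from $u$ to an affine point $w$ lies in the plane $\langle u, w\rangle$ and meets at most $j$ points of $S$'' should do it. One then argues that the three paths meeting at $v$ contribute directions summing to more than $t$ points of $S$ on the line $\langle u,v\rangle \cap \mathrm{PG}(n,q)$ unless there is a repetition, and a repetition forces two of the paths to share an internal vertex, contradicting internal disjointness. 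I would also separately (and more easily) rule out the placements of $u,v$ that make one or both of them line vertices, where the degree-$q$ versus degree-$|S|$ asymmetry and the same parallel-class reasoning apply with even less work.
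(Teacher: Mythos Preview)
Your $C_4$ argument is fine and matches the paper's. The $\theta_{3,t}$ argument, however, rests on a misreading of the notation: throughout the paper $\theta_{3,t}$ is used to mean two vertices joined by $t$ internally disjoint paths of length~$3$ (the first index is the path length, the second the number of paths---admittedly the opposite of the formal definition given earlier, but consistent with every subsequent use and with the cited bounds). With the intended reading the endpoints of a $\theta_{3,t}$ in $\Gamma_{S,n,q}$ are necessarily a point vertex $r$ and a line vertex $\ell$, and each of the $t$ paths has the shape $r\,\text{--}\,m_i\,\text{--}\,p_i\,\text{--}\,\ell$ for some line $m_i$ through $r$ meeting $\ell$ in a point $p_i$. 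The paper's proof is then a two-line geometric observation: the $t+1$ lines $\ell,m_1,\dots,m_t$ all lie in the plane $\Pi$ of $\mathrm{PG}(n+1,q)$ spanned by $r$ and $\ell$; the intersection $\Pi\cap\mathrm{PG}(n,q)$ is a single line $\ell_\infty$; and since $\ell,m_1,\dots,m_t$ pairwise meet in affine points they are pairwise non-parallel, hence hit $\ell_\infty$ in $t+1$ \emph{distinct} points of $S$, contradicting the hypothesis. No induction on path length, no parity case split, and no ``parallel-class walk'' is needed.

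You have also inverted the content of the Remark. The Remark says that two admissible lines which meet \emph{at a point of $S$} (i.e.\ at infinity) are parallel in the linear representation; it does \emph{not} say that two admissible lines through a common affine point share the same point of $S$. Your assertion ``consecutive line vertices on the path share a point vertex, hence by the remark are parallel'' is therefore false---through a given affine point there are $|S|$ admissible lines, one for each direction in $S$---so the mechanism you propose for reducing a long path to a single direction class does not work. Once $\theta_{3,t}$ is read correctly the Remark is used in the opposite direction (pairwise affine intersection $\Rightarrow$ pairwise distinct points of $S$), exactly as in the paper.
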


\begin{proof}
    Note that by construction, the linear representation of $S$ is a geometry in which any two lines intersect in at most one point and any two points lie on at most one line. Thus, the graph $\Gamma_{S, n, q}$ is necessarily $C_4$-free.

    Suppose that $\Gamma_{S, n, q}$ contains a $\theta_{3, t}$. This implies there exists a point vertex $r$ and line vertex $\ell$ between which there are $t$ vertex-disjoint paths of length 3. Geometrically, this implies that the linear representation of $S$ contains a configuration consisting of a point  $r$, $t$ lines which contain $r$, call them $m_1, m_2, \dots, m_t$, all of which intersect the line $\ell$ in our geometry, and such that $r$ is not on $\ell$.
Note that the set of lines $\ell, m_1, m_2, \dots, m_t$ all pairwise intersect, and so no two can be parallel. Thus, in PG$(n + 1, q)$, each of the lines $\ell, m_1, \dots, m_t$ contains a distinct point in $S$.

Note that all of the lines of this configuration lie in a common plane, $\Pi$, the plane in PG$(n+1,q)$ spanned by $r$ and $\ell$. Since $\Pi$ is not contained in the hyperplane PG$(n, q)$, it intersects PG$(n, q)$ in a line, call it $\ell_{\infty}$. But this implies that there is a set of $t+1$ points in $S$ (one for each line $\ell, m_1, \dots, m_t$) which lie on $\ell_{\infty}$. Since we assumed any line in PG$(n, q)$ intersects $S$ in at most $t$ points, this is a contradiction. Thus $\Gamma_{S, n, q}$ is also $\theta_{3, t}$-free.
\end{proof}

In \cite{LW}, the authors obtained the following result. For completeness, we give an explicit construction of such a set for all integers $t \geq 2$ and prime powers $q > t$.

\begin{thm}
    Let $q$ be a prime power and $t > q$ be a positive integer. Then there exists a subset $S$ of points of \emph{PG}$(t + 1, q)$ of size $q^t$ such that no $t+1$ points of $S$ lie on a line. 
\end{thm}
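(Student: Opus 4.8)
The plan is to construct $S$ as a carefully chosen subset of the affine points of $\mathrm{PG}(t+1,q)$, using a "moment curve"-type or polynomial graph of a map $\F_q^t \to \F_q$, and to exploit the fact that a line in $\mathrm{PG}(t+1,q)$ is an affine line (or a point at infinity) to bound collinear points. First I would fix the hyperplane at infinity and write every affine point uniquely as $(x_1,\dots,x_t, f(x_1,\dots,x_t)) \in \F_q^{t+1}$ for a function $f$ to be chosen; this already gives exactly $q^t$ points, one for each $(x_1,\dots,x_t)\in\F_q^t$, so the size requirement is automatic. The point is then to pick $f$ so that no line meets this "graph" in more than $t$ points. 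A line in affine $(t+1)$-space, restricted to the graph, is parametrized by $x_i = a_i + \lambda b_i$ ($\lambda \in \F_q$) subject to $f(a + \lambda b) = a_{t+1} + \lambda b_{t+1}$; so I need: for every affine line $\ell$ in $\F_q^t$ (the projection of the ambient line to the first $t$ coordinates) and every affine-linear function $L(\lambda) = a_{t+1}+\lambda b_{t+1}$, the equation $f(a+\lambda b) = L(\lambda)$ has at most $t$ solutions $\lambda\in\F_q$ — unless the ambient line is vertical, i.e.\ $b_1=\dots=b_t=0$, in which case it meets the graph in exactly one point, which is fine.

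The natural candidate is to take a univariate "high-degree" trick: let $\phi:\F_q^t \to \F_{q^t}$ be an $\F_q$-linear isomorphism and set $f(x) = \mathrm{Tr}_{\F_{q^t}/\F_q}\big(\phi(x)^{N}\big)$ for a suitable exponent $N$, or more simply work over $\F_{q^t}$ directly: identify the affine points with $\{(z, g(z)) : z \in \F_{q^t}\} \subseteq \F_{q^t}\times\F_{q^t}$, which we then re-coordinatize back over $\F_q$ to land in a $2t$-dimensional space — but we only have $t+2$ coordinates available, so this must be compressed. The cleaner route, and the one I would pursue, is: the projection of any ambient line to the first $t$ coordinates is an affine line $\{a+\lambda b\}$ in $\F_q^t$; choose $f(x) = \psi(x)^t$-type data so that the composition $\lambda \mapsto f(a+\lambda b)$ is, for every nonzero direction $b$, a polynomial in $\lambda$ of degree exactly $t$ with no "linear part collapse." Concretely, pick a function whose restriction to every affine line in $\F_q^t$ is a degree-$t$ polynomial in the line parameter whose top coefficient never vanishes; then $f(a+\lambda b) - a_{t+1} - \lambda b_{t+1}$ is a nonzero polynomial of degree $t$ in $\lambda$, hence has at most $t$ roots, giving at most $t$ collinear points of $S$ on any non-vertical line and at most one on a vertical line. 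Then Theorem applies: with "any line meets $S$ in at most $t$ points" we get $\Gamma_{S,t+1,q}$ is $\{C_4,\theta_{3,t}\}$-free, and the edge count $|S|q^{(t+1)+1} = q^t \cdot q^{t+2}$ is exactly what the first theorem of the section demands. (For the application to Theorem~\ref{MainThm} one then sets $n = q^{t+2}$ so that $|S|q^{t+1} = q^{t}\cdot q^{t+1} = n^{(t+... )}$ matches the stated part sizes and exponent; that bookkeeping is routine.)

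The main obstacle is producing an explicit $f$ whose restriction to \emph{every} affine line in $\F_q^t$ is a genuine degree-$t$ polynomial in the line parameter with nonvanishing leading coefficient, \emph{uniformly} over all $q^{t}(q^t-1)/(q-1)$-ish directions — naively a single polynomial of degree $t$ in $t$ variables restricts to degree $\le t$ on every line, but the leading coefficient along the line $\{a+\lambda b\}$ is the degree-$t$ part of $f$ evaluated at $b$, so I need the top-degree form of $f$ to be an anisotropic (no nontrivial zero) homogeneous polynomial of degree $t$ in $t$ variables over $\F_q$ — and such a form exists precisely because $\F_{q^t}/\F_q$ is a degree-$t$ field extension, namely the norm form $N_{\F_{q^t}/\F_q}$ written in a basis, which is anisotropic of degree $t$ in $t$ variables. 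So I would take $f$ to be (a basis-coordinate expression of) the norm form $N_{\F_{q^t}/\F_q}:\F_q^t\to\F_q$; then the leading coefficient along direction $b\ne 0$ is $N(b)\ne 0$, the restriction to each line is degree exactly $t$, and the argument closes. The hypothesis $q < t$ in the statement is presumably needed to ensure this construction genuinely beats the trivial bounds / that $q^t$ points with this property could not occur for small $t$ relative to $q$ for other reasons; I would double-check where exactly $q<t$ is used, but the collinearity bound itself goes through for all prime powers $q$ and all $t\ge 2$.
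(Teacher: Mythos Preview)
Your proposal is correct and lands on exactly the paper's construction and argument: $S$ is the graph of the norm form $N_{\F_{q^t}/\F_q}$ (the paper writes it projectively as $\{(1, x|_q, N(x)) : x \in \F_{q^t}\}$), and the collinearity bound follows because the restriction to any affine line is a degree-$t$ polynomial in the line parameter with leading coefficient $N(b)=N(x-y)\neq 0$, hence at most $t$ roots. You are also right that the inequality between $q$ and $t$ plays no role in the collinearity argument itself; the paper's explicit construction in the very next theorem in fact states the hypothesis as $q>t$.
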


It is well-known that the field $\F_{q^t}$ can be viewed as a vector space over $\F_q$. Fix any basis, $\alpha_1, \dots, \alpha_t$ for $\F_{q^t}$ over $\F_q$. For each $x$ in $\F_{q^t}$, denote by $x|_q$ the vector of the field reduced elements of $x$. That is, $x|_q = (x_1\alpha_1 + \cdots + x_t\alpha_t)|_q = (x_1, \dots, x_t)$. Denote by $N$ the norm function from $\F_{q^t}$ to $\F_q$, i.e.~$N(x)=x^{(q^t-1)/(q-1)}=x\cdot x^q\cdot x^{q^2}\cdot\ldots \cdot x^{q^{t-1}}$.

\begin{thm}
    Let $t \geq 2$ be a positive integer and $q > t$ be a prime power. Then the set 
    $$
    S = \{ (1, x|_q, N(x)) : x \in \F_{q^t} \}
    $$
    as a subset of points of \emph{PG}$(t+1, q)$ contains no $t + 1$ points on a line.
\end{thm}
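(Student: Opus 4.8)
The plan is to argue by contradiction. Suppose $t+1$ distinct points of $S$ lie on a common line $\ell$ of $\mathrm{PG}(t+1,q)$, and write them as $P_i = (1, x_i|_q, N(x_i))$ for distinct $x_1,\dots,x_{t+1}\in\F_{q^t}$. Since every point of $S$ has nonzero first coordinate, none of the $P_i$ is the unique point of $\ell$ lying on the hyperplane $\{x_0=0\}$, so each $P_i$ lies in the affine chart $\{x_0=1\}$. Choosing for each $P_i$ the representative vector $\mathbf{v}_i\in\F_q^{t+2}$ with first entry $1$, one checks that $\mathbf{v}_i = \mathbf{v}_2 + s_i(\mathbf{v}_1-\mathbf{v}_2)$ for scalars $s_i\in\F_q$ which are pairwise distinct because the $P_i$ are distinct points; this produces $t+1$ distinct elements of $\F_q$, which is exactly where the hypothesis $q>t$ enters. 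Comparing the middle block of $t$ coordinates and using that $y\mapsto y|_q$ is an $\F_q$-linear bijection $\F_{q^t}\to\F_q^t$, this relation becomes $x_i = a + s_i b$ in $\F_{q^t}$, where $a:=x_2$ and $b:=x_1-x_2\neq 0$; comparing the last coordinate gives $N(x_i) = N(a) + s_i c$ with $c:=N(x_1)-N(x_2)\in\F_q$.

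Next I would express the norm along this line as a polynomial in the line parameter. Because $s\in\F_q$ is fixed by every power of the Frobenius, for all $s\in\F_q$ we have $N(a+sb) = \prod_{j=0}^{t-1}(a+sb)^{q^j} = \prod_{j=0}^{t-1}\bigl(a^{q^j} + s\,b^{q^j}\bigr)$. Hence the one-variable polynomial $G(T) := \prod_{j=0}^{t-1}\bigl(a^{q^j} + T\,b^{q^j}\bigr)\in\F_{q^t}[T]$ satisfies $G(s)=N(a+sb)$ for every $s\in\F_q$, and its degree is exactly $t$: since $b\neq 0$ we have $b^{q^j}\neq 0$ for all $j$, so the leading coefficient of $G$ is $\prod_{j=0}^{t-1}b^{q^j}=N(b)\neq 0$.

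Combining the two steps, the polynomial $H(T):=G(T)-cT-N(a)\in\F_{q^t}[T]$ vanishes at the $t+1$ distinct elements $s_1,\dots,s_{t+1}$ of $\F_q$. But $\deg H = t$, because $t\ge 2>1$ so subtracting $cT+N(a)$ leaves the top term $N(b)T^t$ untouched. Thus $H$ is a nonzero polynomial of degree $t$ over the field $\F_{q^t}$ with at least $t+1$ roots, which is impossible. Therefore no $t+1$ points of $S$ lie on a common line.

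The conceptual core — that the norm restricted to an $\F_q$-affine line of $\F_{q^t}$ is, as a function of the line parameter, a polynomial of degree $t$ and so cannot coincide with an affine function at $t+1$ points — is short, so I do not anticipate a serious obstacle. The steps that require care are the reductions: verifying that the $q^t$ tuples $(1,x|_q,N(x))$ are genuinely distinct projective points (they are, since distinct $x$ differ in the $x|_q$ block), that projective collinearity of points of $S$ forces the additive relation $x_i=a+s_ib$ via the $\F_q$-linearity of $y\mapsto y|_q$, and that $b\neq 0$ so that $G$ really has degree $t$. As already noted, the bound $q>t$ is needed precisely to guarantee the existence of $t+1$ distinct parameters $s_i$ in $\F_q$.
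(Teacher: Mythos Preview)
Your proof is correct and follows essentially the same approach as the paper: both reduce to the observation that for fixed $a,b\in\F_{q^t}$ with $b\neq 0$, the function $s\mapsto N(a+sb)$ on $\F_q$ agrees with a degree-$t$ polynomial whose leading coefficient is $N(b)\neq 0$, and hence cannot match an affine function of $s$ at $t+1$ distinct values. The only cosmetic difference is that the paper fixes two points and counts how many further points can join them (obtaining at most $t-2$ additional solutions after discarding $a=0,1$), whereas you argue by contradiction from $t+1$ collinear points; the underlying computation and the identification of the leading coefficient as $N(x_1-x_2)$ are identical.
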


\begin{proof}
    We will omit the notation of field reduction to avoid getting bogged down in notation. Take any two vectors in $S$, call them $(1, x, N(x))$ and $(1, y, N(y))$. These two points lie on some line in PG$(t+1, q)$. We will count how many other points $(1, z, N(z))$ in $S$ can lie on this same line. If $(1, z, N(z))$ lies on the same line, it implies that the matrix
    $$
    \begin{bmatrix}
        1 & 1 & 1 \\
        x & y & z \\
        N(x) & N(y) & N(z)
    \end{bmatrix}
    $$
    has an element in its null space $v = [a, b, c]$ with each $a, b, c \in \F_q^*$.  Without loss of generality, we assume $c = -1$. Thus we obtain a linear system 
    \begin{align*}
        a + b  &= 1, \\
        ax + by  &= z, \\
        aN(x) + bN(y) &= N(z).
    \end{align*}
    This system implies that
    \begin{equation}\label{MainEq}
    aN(x) + (1 - a)N(y) = N(ax + (1 - a)y)   
    \end{equation}
    Since $x$ and $y$ are fixed, that leaves $a$ as the only variable. Since $a \in \F_q$, then $a^q = a$ and so expanding (\ref{MainEq}), we obtain a polynomial equation in $a$ of degree $t$. In particular, the largest degree term $a^t$ has coefficient $N(x-y) \neq 0$. Observe that $a = 0$ and $a = 1$ are both solutions to (\ref{MainEq}), which are not valid choices of $a$ for us (since $a, b \neq 0$). Consequently, there are at most $t - 2$ valid choices of $a$ which solve (\ref{MainEq}). Each $a$ determines a unique $z = ax + (1-a)y$, so any line contains at most $t$ points of $S$.
\end{proof}

Consequently, the linear representation of this set produces a biregular, unbalanced bipartite graph with bipartition class sizes $m = q^{t+2}$, $n = q^{2t + 1}$, and $q^{2t + 2}$ edges which is $\{C_4, \theta_{3, t}\}$-free. By an application of Bertrand's postulate, we obtain that for each $t$, there exists a constant $c_t$ such that 
$$
c_tn^{1 + \frac{1}{2t+1}} \leq ex(n, n^{\frac{t+2}{2t + 1}}, \{C_4, \theta_{3, t} \}).
$$
For each such $t$, the order of magnitude matches the upper bounds given in \cite{JLY}. Thus we prove Theorem \ref{MainThm} 
$$
ex(n, n^{\frac{t+2}{2t + 1}}, \{C_4, \theta_{3, t} \}) = \Theta(n^{1 + \frac{1}{2t+1}}).
$$

\newpage

\end{document}